\theoremstyle{remark}{
\newtheorem{Def}{{\rm Definition}}
\newtheorem{Ex}{{\rm Example}}
\newtheorem{Rem}{{\rm Remark}}

}
\theoremstyle{plain}
{

\newtheorem{Thm}{Theorem}

}
\begin{document}
\title[Real algebraic curve-valued functions with prescribed Reeb graphs]{Reconstruction of real algebraic functions into curves with prescribed Reeb graphs}
\author{Naoki kitazawa}
\keywords{(Non-singular) real algebraic manifolds and real algebraic maps. Semi-algebraic sets. Smooth maps. Morse(-Bott) functions. Reeb graphs. Non-singular extensions. \\
\indent {\it \textup{2020} Mathematics Subject Classification}: 14P05, 14P10, 14P20, 14P25, 57R45, 58C05.}

\address{Osaka Central Advanced Mathematical Institute (OCAMI) \\
3-3-138 Sugimoto, Sumiyoshi-ku Osaka 558-8585
TEL: +81-6-6605-3103
}
\email{naokikitazawa.formath@gmail.com}
\urladdr{https://naokikitazawa.github.io/NaokiKitazawa.html}
\maketitle
\begin{abstract}
We discuss reconstructing smooth real algebraic maps onto curves whose {\it Reeb graph} is as prescribed. 
%This can be contribut\ed to real algebraic geometry, especially in explicit examples in real algebraic geometry in a new way. 
The {\it Reeb graph} of a smooth function is the space of all connected components of preimages of all single points and a natural quotient space of the manifold with the vertex set being all connected components containing some singular points of it. This gives a strong tool in geometry of manifolds and appeared already in 1950 with Morse functions. 

The Reeb graph of the natural height of the unit sphere of dimension at least $2$ is a graph with exactly two vertices and one edge. We reconstruct functions, from general finite graphs, conversely. In the differentiable situations, Sharko pioneered this in 2006, followed by Masumoto-Saeki and Michalak, mainly. Related real algebraic situations have been launched and studied by the author. The curve-valued case is first considered here.

% component of the level set .
\end{abstract}
%【REVISE】 combinatoric ～ is → combinatorial object. It is .
%【REVISE】  such that a point is a vertex if and only if the corresponding connected component of the level set contains some singular points → whose vertex set is the set of all points containing some singular points in the corresponding connected
%【REVISE】 We delete "extending the result before".
\section{Introduction: history on our study and terminologies, notions and notation we need and our main result.}
\label{sec:1}
This paper is on construction of explicit real functions with prescribed topological and combinatorial properties.
Constructing the functions is different from knowing the existence of such functions. Existence theory (and approximation) on real algebraic manifolds and maps are well-known as a kind of classical theory and related real algebraic geometry has been founded by Nash and Tognoli \cite{nash, tognoli}, and is developing. See \cite{kollar} for related history, for example.

Systematic construction is difficult in general, and important.
We are concerned with systematic construction of real algebraic functions which are regarded as generalized versions of the canonical projections of the unit spheres and their topological and combinatorial properties. More precisely, we are interested in the Reeb graphs of smooth real algebraic functions. The {\it Reeb graph} of a smooth function is the space of all connected components of preimages of all single points and a quotient space of the manifold with the vertex set being all connected components containing some singular points of the function. The height function of the unit sphere is of simplest smooth real algebraic functions and its Reeb graph is a graph with exactly two vertices and one edge. We are concerned with reconstruction of real algebraic functions whose Reeb graphs are as prescribed. Related studies in the differentiable situations are pioneered by \cite{sharko}, followed by \cite{masumotosaeki}, and \cite{michalak}, mainly. The author has also contributed to this (e.g. \cite{kitazawa1}). Related real algebraic studies are due to the author. The curve-valued function case is first studied here. 

We review some fundamental and important terminologies, notions and notation on manifolds and graphs, rigorously.

\subsection{On smooth or real algebraic manifolds and maps and graphs.}
\subsubsection{Smooth manifolds and maps.}
Let ${\mathbb{R}}^k$ denote the $k$-dimensional Euclidean space. It is also the $k$-dimensional real affine space. It is also a Riemannian manifold endowed with the so-called standard Euclidean metric. For a point $x \in {\mathbb{R}}^k$, let $x_j$ denote the $j$-th component, for an integer $1 \leq j \leq k$, hereafter. For two points $x_1,x_2 \in {\mathbb{R}}^k$, let $||x_1-x_2||:=\sqrt{{\Sigma}_{j=1}^k {(x_{1,j}-x_{2,j})}^2}$ denote the distance of the two points induced by the metric. We also use $||x||:=||x-0||$ in the case $0 \in {\mathbb{R}}^k$ is the origin. Let $D^k:=\{x \in {\mathbb{R}}^k \mid ||x|| \leq 1\}$, the $k$-dimensional unit disk, and $S^{k-1}:=\{x \in {\mathbb{R}}^k \mid ||x||=1\}$, the $k$-dimensional unit sphere.
Let ${\pi}_{m,n}:{\mathbb{R}}^m \rightarrow {\mathbb{R}}^n$ denote the canonical projection, mapping $x=(x_1,x_2) \in {\mathbb{R}}^n \times {\mathbb{R}}^{m-n}={\mathbb{R}}^m$ to $x_1$ with $m>n \geq 1$. It is of simplest real polynomial maps: a real polynomial map $c:{\mathbb{R}}^m \rightarrow {\mathbb{R}}^n$ is a map each component $c_j:{\mathbb{R}}^m \rightarrow \mathbb{R}$ (the $j$-th component) is represented by a real polynomial where $m$ and $n$ are arbitrary positive integers. 

For a differentiable manifold $X$, let $T_x X$ denote the tangent vector space of $X$ at $x \in X$. 
For a differentiable map $c:X \rightarrow Y$ between the differentiable manifolds, let ${dc}_{x}:T_xX \rightarrow T_{c(x)} Y$ be the differential at $x$ and it is a linear map. A point $x \in X$ is a {\it singular point} of $c$ if the rank of the linear map ${dc}_x$ drops. Let $S(c)$ denote the set of all singular points of $c$ and the {\it singular set} of $c$. We only consider smooth maps (maps of the class $C^{\infty}$) as differentiable maps. A {\it diffeomorphism} means a homeomorphism which is smooth and has no singular point. Two smooth manifolds are {\it diffeomorphic} if there exists a diffeomorphism between these manifolds.
\subsubsection{Real algebraic objects.}
We define real algebraic objects respecting existing related classical and sophisticated theory presented in \cite{bochnakcosteroy, kollar, shiota} and our papers and preprints such as \cite{kitazawa2, kitazawa4, kitazawa5}.

A connected component of the zero set of a real polynomial map $c:{\mathbb{R}}^m \rightarrow {\mathbb{R}}^n$ is {\it non-singular} if the rank of $c$ does not drop at any point of $x \in c^{-1}(0) \subset {\mathbb{R}}^m$: the implicit function theorem is respected. A {\it semi-algebraic} set of the real affine space ${\mathbb{R}}^m$ means a subset of ${\mathbb{R}}^m$ represented as the intersection of finitely many sets each of which is either of the form $\{x \in {\mathbb{R}}^m \mid c_j(x)>0\}$ or $\{x \in {\mathbb{R}}^m \mid c_j(x) \geq 0\}$, or $\{x \in {\mathbb{R}}^m \mid  c_j(x)=0\}$, where $c_j$ is a polynomial function. The zero set of a real polynomial map and the set represented as a union of connected components of the zero set of the map is regarded as a semi-algebraic set and called a {\it real algebraic} set. In several articles and preprints, we have called such subsets of the zero sets of real polynomial maps as {\it real algebraic} manifolds if the sets are non-empty and non-singular. We call such a non-singular manifold a {\it regular real algebraic} manifold. This is also a smooth closed manifold. The intersection of finitely many sets each of which is of the form $\{x \in {\mathbb{R}}^m \mid c_j(x)>0\}$ is an $m$-dimensional smooth manifold with no boundary if it is non-empty. We call such a manifold a {\it non-regular} {\it real algebraic} manifold. We call subsets of ${\mathbb{R}}^m$ of these two types {\it real algebraic} manifolds. The real affine space ${\mathbb{R}}^k$ and the unit sphere $S^{k-1} \subset {\mathbb{R}}^{k}$ are regular real algebraic manifolds and ${\mathbb{R}}^k$ and the interior $D^k-S^{k-1} \subset {\mathbb{R}}^k$ of the unit disk $D^k\subset {\mathbb{R}}^k$ is a non-regular real algebraic manifold.

A {\it real algebraic} function $q:S \rightarrow \mathbb{R}$ on a semi-algebraic set $S \subset {\mathbb{R}}^m$ is a smooth function such that ${\Sigma}_{j=1}^{i+1} p_j(x){q(x)}^{j-1}=0$ at any point $x \in S$ for some family $\{p_j(x)\}_{j=1}^{i+1}$ of finitely many real polynomials of $m$ variables with $i$ being a positive integer and $p_{i+1}(x)$ is not a polynomial giving a constant function with the values being $0$. 
Every real polynomial $r(x)$ with $m$ variables gives a real algebraic functions on ${\mathbb{R}}^m$ and its arbitrary semi-algebraic set mapping $x$ to $r(x)$. We can understand this fact by considering $i=1$, $(p_1(x),p_2(x))=(r(x),1)$ and $q(x)=r(x)$.
A {\it real algebraic} map is a map on a semi-algebraic set each of whose component is a real algebraic function. Furthermore, inductively, we can define maps in the following as {\it real algebraic} maps. Real algebraic functions are of course real algebraic maps.
\begin{itemize}
\item If the real affine space ${\mathbb{R}}^n$ of the target of a real algebraic map can be restricted to another semi-algebraic set of ${\mathbb{R}}^n$, then the resulting map is also a real algebraic map. 
\item The composition of real algebraic maps is also a real algebraic map.
\end{itemize}
\subsubsection{Graphs.}

Our {\it graph} means a 1-dimensional connected and compact CW complex. This is also a finite CW complex. We omit rigorous exposition on fundamental notions on (CW) complexes. An {\it edge} of our graph means a $1$-cell of it and a {\it vertex} of it means a $0$-cell of it. As an extended case, here, a circle is also defined as a {\it graph} with exactly one edge, homeomorphic to $S^1$, and no vertex. A {\it vertex set} (an {\it edge set}) of the graph means the set of all edges (resp. vertices) of it.  An {\it isomorphism} between two graphs is a piecewise smooth homeomorphism mapping the vertex set of a graph into that of the other graph. 
\subsection{The Reeb graph of a smooth function into a non-singular curve.}
We can define the {\it Reeb graph} of a function into a non-singular curve as follows. For a smooth function $c:X \rightarrow C$ on a smooth closed manifold $X$ into a non-singular curve $C$ such that the image $c(S(c))$ of the singular set of $c$ is a finite set, consider the following.
\begin{itemize}
\item Let ${\sim}_c$ denote the relation on $X$ as follows: $x_1 {\sim}_c x_2$ if and only if $x_1$ and $x_2$ are in a same connected component of a preimage $c^{-1}(y)$. This is the equivalence relation. The quotient space $W_c:=X/{{\sim}_c}$ is  the {\it Reeb space} of $c$. Let $q_c:X \rightarrow W_c$ denote the quotient map. We also have a continuous function $\bar{c}:W_C \rightarrow \mathbb{R}$ with $c=\bar{c} \circ q_c$ uniquely.
\item \cite{saeki2} guarantees that we have a graph by the following. A point $v$ there is a vertex of the graph if and only if the preimage ${q_c}^{-1}(v)$ contains some singular point of $c$ and this graph is the {\it Reeb graph} of $c$.
\end{itemize} 
Note that Reeb graphs are very classical tools, appearing in \cite{reeb} with Morse functions. They have been also strong tools in theory of Morse functions and applications to geometry of manifolds. We omit precise presentations on related studies.
\subsection{Our main result, stating that for a finite graph equipped with a piecewise smooth function of a certain class satisfying a kind of genericity, and the content of our paper.}
Our result is on explicit reconstruction of a real algebraic function into a non-singular real algebraic curve whose Reeb graph is as prescribed. This extends our main result of \cite{kitazawa2, kitazawa3}. This also respects related similar or extended results of the author such as ones presented in \cite{kitazawa4, kitazawa5, kitazawa6}. 

In the second section, we explicitly exhibit our main result, as Theorem \ref{thm:1}. We also prove this there. The proof is a kind of direct extension of arguments and results in the presented article sand preprints. We also present an explicit case for $C:=S^1$, for example (Theorems \ref{thm:2} and \ref{thm:3}). The third section remarks our result. 
\section{Our main result.}
Let $(a,b):=\{x \in \mathbb{R} \mid a<x<b\}$ and $[a,b]:=\{x \in \mathbb{R} \mid a \leq x \leq b\}$ for two real numbers $a<b$.

A {\it Morse} function $c:X \rightarrow C$ on a manifold $X$ means a smooth function into a $1$-dimensional smooth manifold $C$ with no boundary which has no singular point on the boundary of $X$ and at each singular point $p$ of which
 we have the representation $c(x_1,\cdots x_m)={\Sigma}_{j=1}^{m-i(p)} {x_j}^2-{\Sigma}_{j=1}^{i(p)} {x_{m-i(p)+j}}^2$ for suitable local coordinates and an integer $0 \leq i(p) \leq \frac{m}{2}$. Note that we can define $i(p)$ uniquely and by respecting an orientation of $C$, we can define an integer $0 \leq i(p) \leq m$ uniquely.

{\it Morse-Bott} functions are defined as extensions of Morse functions: they are at each singular point represented as the composition of a smooth map with no singular point with a Morse function. 

For such functions, check \cite{milnor} and see also \cite{bott}, for example.

The {\it degree} of a vertex of a graph means the number of edges containing the vertex.
\begin{Thm}
\label{thm:1}
Let $C$ be a $1$-dimensional connected regular real algebraic manifold in ${\mathbb{R}}^2$ embedded into a $2$-dimensional non-regular real algebraic manifold $N_{\rm Int} (C) \subset {\mathbb{R}}^2$. 
Let $i_C:C \rightarrow N_{\rm Int} (C)$ denote the inclusion.
We assume the following.
\begin{enumerate}
\item \label{thm:1.1} There exists another semi-algebraic set $N(C)$ of ${\mathbb{R}}^2$ and the following are satisfied.
\begin{itemize}
\item The set $N(C)$ is also a smooth, connected and compact manifold diffeomorphic to $C \times [-1,1]$ whose interior considered in ${\mathbb{R}}^2$ is $N_{\rm Int} (C) \subset {\mathbb{R}}^2$ and which admits a diffeomorphism ${\phi}_{C,N(C)}:N(C) \rightarrow C \times [-1,1]$ mapping $x \in C$ to $(x,0) \in C \times \{0\} \subset C \times [-1,1]$ and mapping $N_{\rm Int}(C)$ onto $C \times (-1,1)$. 
\item There also exists a real algebraic map with no singular point ${\pi}_{C,N(C)}:N(C) \rightarrow C$ such that ${\pi}_{C,N(C)} \circ i_C$  is the identity map on $C$.
\item For an arbitrary set $A_C \subset C$, the diffeomorphism ${\phi}_{C,N(C)}$ maps ${{\pi}_{C,N(C)}}^{-1}(A_C)$ onto $A_C \times [-1,1]$.
\end{itemize}

\item \label{thm:1.2}
There exists a piecewise smooth map $c_G:G \rightarrow C$ of a graph $G$ into $C$ satisfying the following.
\begin{itemize}
\item The degree of each vertex of $G$ is $1$ or $3$.
\item The restriction of $c_G$ to each edge is a smooth embedding. For each vertex $v \in G$ of degree $3$ and some small regular neighborhood $N(v)$ of $v$ in the graph $G$, the value $c_G(v)$ is in the interior of the set $c_G(N(v)) \subset C$ considered in the curve $C$. The restriction of $c_G$ to the vertex set of $G$ is also injective.
\item For some piecewise smooth embedding $\tilde{c_G}:G \rightarrow N_{\rm Int} (C) $ and the restriction ${\pi}_{C,N(C)} {\mid}_{N_{\rm Int} (C)}:N_{\rm Int} (C) \rightarrow C$, we have the relation $c_G={\pi}_{C,N(C)} {\mid}_{N_{\rm Int} (C)} \circ \tilde{c_G}$.
\end{itemize}
\end{enumerate}
Let $m \geq 2$ be a positive integer. Then there exist an $m$-dimensional closed and connected manifold $M \subset {\mathbb{R}}^{m+1}$ which is also a regular real algebraic manifold and the zero set of a real polynomial function and a real algebraic map $f_{C,N(C)}:M \rightarrow N_{\rm Int} (C)$ with a function ${\pi}_{C,N(C)} {\mid}_{N_{\rm Int} (C)} \circ f_{C,N(C)}:M \rightarrow C$ which is Morse and whose Reeb graph is isomorphic to $G$.
\end{Thm}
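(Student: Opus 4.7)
My plan is to extend the polynomial constructions in the author's earlier papers, where the target was $\mathbb{R}$, to a curve-valued target, using the real algebraic tube $N(C)$ together with its algebraic retraction $\pi_{C,N(C)}$ to reduce the construction to a family of pointwise real-line problems.

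First, I would reinterpret the combinatorial data. By hypothesis (\ref{thm:1.2}), $c_G$ looks near every point exactly like the height-function-on-graph data handled previously: each edge embeds smoothly under $c_G$, the vertex images are distinct points of $C$, and at each degree-$3$ vertex the image is not a local extremum, so such vertices will correspond to Morse saddles of index $1$ or $m-1$ while degree-$1$ vertices will correspond to maxima or minima. The lift $\tilde{c_G}$ realises $G$ as a piecewise smooth graph inside $N_{\rm Int}(C)\subset \mathbb{R}^2$, compatibly with $\pi_{C,N(C)}$.

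Next, I would construct $M$ as the zero set inside $\mathbb{R}^{m+1}=\mathbb{R}^2\times \mathbb{R}^{m-1}$ of a single real polynomial, built from a defining polynomial of $C$ together with transverse local polynomial models placed along $\tilde{c_G}(G)$: an $S^{m-1}$-thickening in the $\mathbb{R}^{m-1}$-directions over the interior of each edge, a Morse saddle model at each degree-$3$ vertex, and a disk-cap at each degree-$1$ vertex. These local models are the building blocks already used in \cite{kitazawa2, kitazawa3} for the case $C=\mathbb{R}$; the real algebraic trivialisation $N(C)\cong C\times[-1,1]$ with polynomial retraction onto $C$ from hypothesis (\ref{thm:1.1}) lets one pull the models back via $\pi_{C,N(C)}$ and assemble them coherently along the curve. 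The map $f_{C,N(C)}$ is then the restriction to $M$ of the canonical projection $\mathbb{R}^{m+1}\to \mathbb{R}^2$, the construction being arranged so that the image lies in $N_{\rm Int}(C)$.

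To finish, I would check that $\pi_{C,N(C)}\circ f_{C,N(C)}$ is Morse with Reeb graph isomorphic to $G$: at a regular value $y\in C$ the preimage consists of one $S^{m-1}$ per edge of $G$ over $y$, and at each critical value $y=c_G(v)$ the local model contributes exactly the expected cap or saddle. The main obstacle is the algebraic globalisation, that is, realising the whole of $M$ as one polynomial zero set rather than as a piecewise-smooth patchwork, especially when $C$ is not simply connected. For $C\cong \mathbb{R}$ the theorem is essentially a restatement of prior work, but for $C\cong S^1$ the polynomial formulas must close up coherently around the circle; the algebraic retraction $\pi_{C,N(C)}$ is precisely what enables this, since substituting this genuinely polynomial map into the one-dimensional transverse formulas yields a global polynomial on $\mathbb{R}^{m+1}$ with the required local behaviour uniformly at every point of $C$.
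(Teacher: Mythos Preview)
Your proposal has a genuine gap at the ``algebraic globalisation'' step, and the mechanism you offer for it does not work as stated. You want to place local polynomial models (caps, saddles, $S^{m-1}$-tubes) along $\tilde{c_G}(G)$ and then fuse them into a single polynomial by ``substituting $\pi_{C,N(C)}$ into the one-dimensional transverse formulas.'' But the transverse picture over a point of $C$ depends on how many edges of $G$ sit over that point and on whether a vertex is present, so there is no fixed transverse formula into which one could substitute; and $\pi_{C,N(C)}$ maps \emph{to} $C$, not to a transverse coordinate, so composing with it cannot by itself produce the varying fibrewise behaviour you need. (Note also that $\pi_{C,N(C)}$ is only assumed real algebraic, not polynomial: in Example~\ref{ex:1}\,(\ref{ex:1.2}) it is $x\mapsto x/\|x\|$.) Patching local polynomial pieces into one global polynomial zero set is precisely the hard part, and you have not supplied a method for it.

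The paper avoids this issue entirely by never building or gluing local models. It takes a smooth regular neighbourhood $N(G)\subset N_{\rm Int}(C)\subset\mathbb{R}^2$ of the planar image $\tilde{c_G}(G)$, approximates its boundary curve in the Whitney $C^r$ topology by the zero set of a single real polynomial $f_{\tilde{c_G},\mathbb{R}}$ on $\mathbb{R}^2$ (Nash--Tognoli type approximation, as in \cite{kollar, lellis, bodinpopescupampusorea}), arranged so that the resulting region is $N_0(G)=\{f_{\tilde{c_G},\mathbb{R}}\ge 0\}$, and then sets
\[
M\ :=\ \bigl\{(x,y)\in\mathbb{R}^2\times\mathbb{R}^{m-1}:\ f_{\tilde{c_G},\mathbb{R}}(x)-\|y\|^2=0\bigr\},\qquad f_{C,N(C)}:=\pi_{m+1,2}\big|_{M}.
\]
This is the US-construction of Definition~\ref{def:1}: one global polynomial, no patching. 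The caps and saddles you anticipate arise automatically as the singular points of $\pi_{C,N(C)}\circ\pi_{m+1,2}\big|_M$, which occur exactly where $\partial N_0(G)$ is tangent to the fibres of $\pi_{C,N(C)}$; a generic choice of the approximation makes these Morse, and the Reeb graph is then read off from the combinatorics of $N_0(G)$ as in \cite{bodinpopescupampusorea}. The retraction $\pi_{C,N(C)}$ enters only at the very end, as the outer composition to obtain a $C$-valued function; it plays no role in producing $M$ itself.
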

\begin{Rem}
In short, the condition (\ref{thm:1.1}) of Theorem \ref{thm:1} is for the structure of a natural product bundle of $N(C)$ over $C$ whose fiber is $[-1,1]$. For classical and fundamental theory of bundles, see \cite{steenrod} and see also \cite{milnorstasheff} for example. The manifold $N(C)$ is also a closed tubular neighborhood of $C$ and (a kind of specific cases of) this is discussed in \cite[Discussion 7]{kollar}, for example. Later, Example \ref{ex:1} presents explicit cases.

The condition (\ref{thm:1.2}) is for genericity of embedding of the graph and respects the case of \cite{bodinpopescupampusorea}, discussed for the case $C:=\mathbb{R}$.

\end{Rem}
Hereafter, we also need to understand some fundamental notions and arguments on singularity theory, explained in \cite{golubitskyguillemin}.

Related to this, we explain Whitney topologies on spaces of smooth maps from a smooth manifold into another smooth manifold.
For a positive integer $r>0$, the {\it Whitney $C^r$ topology} of the space of all smooth maps from a smooth manifold $X$ into another smooth manifold $Y$ is defined by the following roughly and understanding the rigorous definition is an exercise for readers: two maps are close if and only if their values at each point of $X$ and their $j$-th derivatives at each point of $X$ are close for $1 \leq j \leq r$. The Whitney $C^{r_2}$ topology of the space is stronger than the Whitney $C^{r_1}$ topology of it for $r_2>r_1$. We can also define the {\it Whitney $C^{\infty}$ topology} of it as a stronger topology (as the inductive limit for these topologies).   
\begin{proof}[A proof of Theorem \ref{thm:1}.]
We mainly respect \cite{bodinpopescupampusorea} with our paper \cite{kitazawa2} and a preprint \cite{kitazawa6}.

We can choose a small regular neighborhood $N(G) \subset N_{\rm Int}(C)$ of the graph $\tilde{c_G}(G)$ in the smooth category as presented in \cite{hirsch} for example. We can also consider approximating the boundary of this regular neighborhood by the zero set of some real polynomial function in the $C^r$ or $C^{\infty}$ Whitney topology with $r>1$ and we have a new small regular neighborhood $N_0(G) \subset N_{\rm Int}(C)$ of the graph $\tilde{c_G}(G)$. For this kind of approximation check related surveys presented in \cite{kollar, lellis} for example. This is also used in \cite{bodinpopescupampusorea} and motivated by this we use this in \cite{kitazawa2, kitazawa6} for example. 

We discuss related arguments of the paper \cite{kitazawa2}. We also respect the preprint \cite{kitazawa6} where we do not need to understand this preprint. Let $f_{\tilde{c_G},\mathbb{R}}$ be the real polynomial function whose zero set is thr boundary of $N_0(G)$ and we can regard the region $N_0(G) \subset N_{\rm Int}(C)$ surrounded by this boundary and containing the image $\tilde{c_G}(G)$ as the semi-algebraic set defined by the inequality $f_{\tilde{c_G},\mathbb{R}}(x) \geq 0$. We have the zero set $S_{f_{\tilde{c_G},\mathbb{R}}}:=\{(x,y) \in {\mathbb{R}}^2 \times {\mathbb{R}}^{m-1}={\mathbb{R}}^{m+1} \mid f_{\tilde{c_G},\mathbb{R}}(x)-{||y||}^2=0\}$ of $f_{\tilde{c_G},\mathbb{R}}(x)-{||y||}^2$. This is a regular real algebraic manifold in ${\mathbb{R}}^{m+1}$ and such manifolds and their canonical projections are of certain classes generalizing the unit sphere $S^m \subset {\mathbb{R}}^{m+1}$ and the canonical projection ${\pi}_{m+1,k} {\mid}_{S^m}$ of the unit sphere with $m \geq k \geq 1$. More precisely, here, these canonical projections are regarded as {\it special generic} maps, discussed in \cite{saeki1}, mainly. In \cite{saeki1}, fundamental and explicit theory on their differential topological structures and the topologies and differentiable structures of closed manifolds admitting such maps is discussed.  

We can define our desired map $f_{C,N(C)}:={{\pi}_{m+1,2}} {\mid}_{S_{f_{\tilde{c_G},\mathbb{R}}}}:M:=S_{f_{\tilde{c_G},\mathbb{R}}} \rightarrow N_{\rm Int} (C)$ in such a way that the resulting Reeb graph $W_{{\pi}_{C,N(C)} {\mid}_{N_{\rm Int} (C)} \circ f_{C,N(C)}}$ is isomorphic to $G$ and that the resulting function is a Morse function, by considering the approximation suitably, beforehand. For this, especially, for the graphs, we also respect main arguments of \cite{bodinpopescupampusorea} and generalize the arguments for the conditions (\ref{thm:1.1}, \ref{thm:1.2}). For checking that the functions are Morse and related singularity theory, check \cite{golubitskyguillemin} for example.

This completes the proof.

\end{proof}

Here, we review the definition of a {\it special generic} map. A smooth map $c:X \rightarrow Y$ between smooth manifolds with no boundaries is {\it special generic}, if we have the representation $c(x_1,\cdots x_m)=(x_1,\cdots,x_{n-1},{\Sigma}_{j=1}^{m-n+1} {x_{n-1+j}}^2)$ ($m \geq n \geq 1$) for suitable local coordinates. 

In the following, we present
Example \ref{ex:1}, which is for $N(C)$ in Theorem \ref{thm:1}.
\begin{Ex}
\label{ex:1}
\begin{enumerate}
\item \label{ex:1.1} As a simplest case, we can consider the case $C:=\{(x,0) \mid x \in \mathbb{R}\}$ with a positive real number $a>0$ and $N(C):=\{(x,t) \mid x \in \mathbb{R}, -a \leq t \leq a\}$ with ${\pi}_{C,N(C)}(x,t):=x$.
\item \label{ex:1.2} As another simplest case, we can consider the case $C:=S^1$ with a positive real number $0<a<1$ and $N(C):=\{x \in {\mathbb{R}}^2 \mid 1-a \leq ||x|| \leq 1+a\}$ with ${\pi}_{C,N(C)}(x):=(\frac{1}{||x||}x_1,\frac{1}{||x||}x_2)$ ($x=(x_1,x_2)$).
\end{enumerate}
\end{Ex}
The following is another result related to Example \ref{ex:1} (\ref{ex:1.2}) and Theorem \ref{thm:1}.
\begin{Thm}
\label{thm:2}
Let $G$ be a graph having exactly $i+1$ vertices with $i+1>2$. Let $\{v_j\}_{j=1}^{i+1}$ be the family of the $i+1$ vertices.
Each of the closures of edges of $G$ connects $v_j$ and $v_{j+1}$ for some $1 \leq j \leq i$ or $v_{i+1}$ and $v_1$.

The number of all edges whose closures connect $v_j$ and $v_{j+1}$ is $a_j>0$ and that of all edges whose closures connect $v_{i+1}$ and $v_1$ is $a_{i+1}>0$, satisfying the relation $(a_{j}, a_{j+1}) \neq (1,1)$ for $1 \leq j \leq i$ and $(a_{i+1},a_1) \neq (1,1)$.

Then for any integer $m$ greater than $1$, there exist an $m$-dimensional closed and connected manifold $M \subset {\mathbb{R}}^{m+1}$ which is also a regular real algebraic manifold and the zero set of a real polynomial function of degree $2{\Sigma}_{j=1}^{i+1} (a_j-1)+4$ and a real algebraic map $f_{C,N(C)}:M \rightarrow N(C) \subset {\mathbb{R}}^2$ with a function ${\pi}_{C,N(C)} {\mid}_{N(C)} \circ f_{C,N(C)}:M \rightarrow C$ which is Morse and whose Reeb graph is isomorphic to $G$ where the notation and the situation of Example \ref{ex:1} {\rm (}\ref{ex:1.2}{\rm )} are considered.
\end{Thm}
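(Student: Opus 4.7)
The plan is to realize $G$ explicitly as a planar graph inside the annulus $N(C)$ of Example \ref{ex:1} (\ref{ex:1.2}) and then apply the hypersurface construction used in the proof of Theorem \ref{thm:1}, keeping careful track of the polynomial degree. First, fix distinct angles $\theta_1<\theta_2<\cdots<\theta_{i+1}<\theta_1+2\pi$, place $\tilde{c_G}(v_j):=(\cos\theta_j,\sin\theta_j)\in C\subset N_{\rm Int}(C)$, and realize the $a_j$ edges joining $v_j$ to $v_{j+1}$ (indices cyclic, with $v_{i+2}:=v_1$) as $a_j$ pairwise disjoint smooth arcs inside $N_{\rm Int}(C)$, each sent diffeomorphically under ${\pi}_{C,N(C)}$ onto the circular subarc of $C$ between the two chosen angles; I would take one arc of each bundle along $C$ itself and position the rest at slightly different radial levels. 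This produces a piecewise smooth embedding $\tilde{c_G}:G\to N_{\rm Int}(C)$. The hypothesis $(a_j,a_{j+1})\neq(1,1)$ forces every vertex to have valence $a_{j-1}+a_j\geq 3$, which is precisely what is needed for each $v_j$ to survive as a genuine vertex of the final Reeb graph.

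Next, take a thin regular neighborhood $N(G)\subset N_{\rm Int}(C)$ of $\tilde{c_G}(G)$. Since $\tilde{c_G}(G)$ is a planar connected graph with $V=i+1$ and $E=\sum_{j=1}^{i+1}a_j$, Euler's formula on $S^2$ yields exactly
\[
F=E-V+2=\sum_{j=1}^{i+1}(a_j-1)+2
\]
faces, so $\partial N(G)$ splits into $F$ disjoint smooth simple closed curves, namely the outer boundary, the ``central'' curve bounding the polygon-type face, and one ``lens'' curve between each pair of consecutive parallel edges in each bundle. With a careful choice of embedding each such boundary component is convex and can be approximated in the Whitney $C^r$ topology ($r>1$) by the zero set of a real polynomial $\ell_k$ of degree two, i.e. by a smooth oval. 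Form $P(x):=\varepsilon\prod_{k=1}^{F}\ell_k(x)$ with $\varepsilon\in\{\pm 1\}$ chosen so that $P>0$ on the resulting region $N_0(G):=\{P(x)\geq 0\}$. A generic choice of the ellipse data keeps $N_0(G)$ a regular neighborhood of $\tilde{c_G}(G)$ inside $N_{\rm Int}(C)$, and by construction $\deg P=2F=2\sum_{j=1}^{i+1}(a_j-1)+4$.

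With $P$ in hand, mimic the hypersurface construction from the proof of Theorem \ref{thm:1} by setting
\[
M:=\bigl\{(x,y)\in\mathbb{R}^2\times\mathbb{R}^{m-1}\ \bigm|\ P(x)-\|y\|^2=0\bigr\}\subset\mathbb{R}^{m+1}.
\]
The defining polynomial has degree $\max(\deg P,2)=2\sum_{j=1}^{i+1}(a_j-1)+4$, and $M$ is a regular real algebraic closed connected $m$-manifold. The projection $f_{C,N(C)}:={\pi}_{m+1,2}|_M:M\to N(C)$ is a real algebraic special generic map onto $N_0(G)$ whose singular set maps diffeomorphically onto $\partial N_0(G)$, and the composition ${\pi}_{C,N(C)}\circ f_{C,N(C)}:M\to C$ is the candidate function.

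The main obstacle is the final verification that, after a suitable genericity of the ellipse data, this composition is Morse and its Reeb graph is isomorphic to $G$. The Morse property follows from standard transversality as in \cite{golubitskyguillemin}: one only needs the angular projection ${\pi}_{C,N(C)}|_{\partial N_0(G)}$, restricted to each oval $\{\ell_k=0\}$, to have only nondegenerate critical points, and this is generic in the parameters of each ellipse. For the Reeb graph identification, note that over a regular $\theta\in C$ the fiber in $M$ is a disjoint union of $(m-1)$-spheres, one per connected component of the radial segment $N_0(G)\cap{\pi}_{C,N(C)}^{-1}(\theta)$; as $\theta$ sweeps around $C$ the number of such components changes exactly when it crosses some $\theta_j$, and the local merging/splitting is dictated by the valence $a_{j-1}+a_j\geq 3$ at $v_j$, matching the combinatorial structure of $G$ edge-by-edge and vertex-by-vertex. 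The condition $(a_j,a_{j+1})\neq(1,1)$ is exactly the statement that no vertex of $G$ degenerates into a regular (valence $2$) point of the Reeb graph; the remaining bookkeeping proceeds as in \cite{bodinpopescupampusorea, kitazawa2, kitazawa6}.
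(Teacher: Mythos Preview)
Your overall architecture matches the paper's: build a planar region in the annulus whose boundary is a union of $\sum_{j}(a_j-1)+2$ conics, form $M$ as the hypersurface $\{P(x)-\|y\|^2=0\}$, and read off the Morse property and the Reeb graph from the angular projection. Your Euler--characteristic count of boundary components is correct and is a pleasant way to see why the degree comes out to $2\sum_j(a_j-1)+4$.

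Where your argument diverges from the paper, and where it has a genuine gap, is the passage from ``boundary components of a regular neighborhood'' to ``degree-$2$ ovals''. The sentence ``each such boundary component is convex and can be approximated in the Whitney $C^r$ topology by the zero set of a real polynomial $\ell_k$ of degree two'' is not justified: Nash--Tognoli type approximation gives you a nearby real algebraic curve, but with no control on its degree; a generic convex oval is not $C^r$-close to any conic. Since the degree statement in the theorem is the whole point beyond Theorem~\ref{thm:1}, this step cannot be handled by approximation. A related issue is that your ellipses must be tangent to the radial lines through the vertex angles $\theta_j,\theta_{j+1}$ (otherwise the angular projection picks up spurious critical values and the Reeb graph acquires extra vertices); this tangency is a constraint you must impose, not something approximation gives for free.

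The paper sidesteps both problems by never embedding $G$ or taking a regular neighborhood. It simply starts from the exact annulus $N(C)$ (two concentric circles) and, in each sector between the rays at angles $\tfrac{2j\pi}{i+1}$ and $\tfrac{2(j+1)\pi}{i+1}$, removes $a_j-1$ round disks whose boundary circles are tangent to both rays. The resulting region already has boundary equal to a union of $2+\sum_j(a_j-1)$ honest circles, so $P$ is the product of the corresponding quadratics $\|x-b\|^2-r$ with no approximation, and the tangency condition is built in by construction. If you replace your ``approximate by ellipses'' step with this explicit choice of circles, your write-up becomes correct and essentially coincides with the paper's proof.
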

Hereafter, two $1$-dimensional real algebraic manifolds in ${\mathbb{R}}^2$ are {\it mutually tangent} at a point $p \in {\mathbb{R}}^2$ if they contain $p$ and their tangent vector spaces at $p$ agree.
A {\it circle} of a fixed radius $r>0$ means a real algebraic manifold of the form $\{(x_1,x_2) \mid 
{||x-b||}^2=r\}$ for $x=(x_1,x_2) \in {\mathbb{R}}^2$ and $b=(b_1,b_2) \in {\mathbb{R}}^2$, diffeomorphic to $S^1$.

We also expect readers to know elementary notions and arguments on plane geometry (Euclidean geometry).
\begin{proof}[A proof of Theorem \ref{thm:2}]

To each vertex $v_j$, we correspond $(\cos \frac{2j\pi}{i+1},\sin \frac{2j\pi}{i+1}) \in {\mathbb{R}}^2$.
We can choose a suitable small real number $0<a<1$ and the following mutually disjoint circles $C_a$ each of which bounds the compact disk $D_{C_a} \subset N_{\rm Int} (C)$ with these disks $D_{C_a}$ being mutually disjoint.
\begin{itemize}
\item For each integer $1 \leq j \leq i+1$, exactly $a_j-1$ circles of suitable radii contained in the sector formed and surrounded by $\{(r\cos \frac{2j\pi}{i+1},r\sin \frac{2j\pi}{i+1}) \in {\mathbb{R}}^2 \mid r \geq 0\}$ and $\{(r\cos \frac{2(j+1)\pi}{i+1},r\sin \frac{2(j+1)\pi}{i+1})  \in {\mathbb{R}}^2 \mid r \geq 0\}$ are chosen.
\item Each of the $a_j-1$ circles and $\{(t\cos \frac{2j\pi}{i+1},t\sin \frac{2j\pi}{i+1}) \in {\mathbb{R}}^2 \mid t \in \mathbb{R} \}$ are mutually tangent at a point and the circle and $\{(t\cos \frac{2(j+1)\pi}{i+1},t\sin \frac{2(j+1)\pi}{i+1}) \in {\mathbb{R}}^2  \mid t \in \mathbb{R} \}$ are mutually tangent at another point.
\end{itemize}

By removing the interiors of the disks $D_{C_a}$ from $N(C)$, we have a compact and connected region surrounded by circles, in $N(C)$. 
The union of the circles is regarded as the zero set of a product of $2+{\Sigma}_{j=1}^{i+1}(a_j-1)$ functions of the form ${||x-b||}^2-r$ with $b=(b_1,b_2) \in {\mathbb{R}}^2$, $r>0$ and the variable $x=(x_1,x_2)$.
The region, which is a compact and connected set in ${\mathbb{R}}^2$, is regarded as a semi-algebraic set in ${\mathbb{R}}^2$ and we can have a real algebraic map onto the resulting region like the map presented in the proof of Theorem \ref{thm:1}, according to \cite{kitazawa2} (\cite{kitazawa6}). 
By the argument in the statement, we have our desired result. Note that the conditions on the values $a_j$ are to construct our function with some singular points in each preimage ${f_{C,N(C)}}^{-1}(\{(r\cos \frac{2j\pi}{i+1},r\sin \frac{2j\pi}{i+1}) \in {\mathbb{R}}^2 \mid r \geq 0\})$ for each integer $0 \leq j \leq i+1$.

This completes the proof. 
\end{proof}

Note that some graphs of Theorem \ref{thm:2} may not satisfy the conditions of Theorem \ref{thm:1}.
Note again that Theorems \ref{thm:1} and \ref{thm:2} are extended from the corresponding cases for $C:=\mathbb{R}$ in \cite{kitazawa2, kitazawa3, kitazawa4, kitazawa6} for example.

We present Theorem \ref{thm:3}, as another result. 

Hereafter, we need the notion of connected sum and boundary connected sum of manifolds. We consider this in the differentiable (smooth) category.

We use ${\sharp}_{j=1}^{l_1} M_j$ for a connected sum of these $l_1$ connected manifolds $M_j$. We use ${\natural}_{j=1}^{l_2} N_j$ for a boundary connected sum of these $l_2$ connected manifolds $N_j$. These smooth manifolds are defined uniquely up to (the existence of) diffeomorphisms and this rule contains no problem. 

Hereafter, an {\it ellipsoid} means a semi-algebraic set of the form $\{x \in {\mathbb{R}}^k \mid a_j{x_j}^2 \leq r \}$ with $x_j$ being the $j$-th component of $x$, $a_j>0$ and $r>0$. It is diffeomorphic to $D^k$.

We introduce a method to construct regular real algebraic manifolds from semi-algebraic sets of a certain class. Such construction is presented in our proof of Theorem \ref{thm:1} and \ref{thm:2} and this also reviews the construction.

\begin{Def}
\label{def:1}
Let $F$ be a real polynomial function with $k>0$ variables. Let $k^{\prime}$ be a positive integer.
According to the paper \cite{kitazawa2}, followed by \cite{kitazawa6}, from a semi-algebraic set $\{x \in {\mathbb{R}}^k \mid F(x) \geq 0\}$ which is connected and which is surrounded by the zero set $\{x \in {\mathbb{R}}^k \mid F(x)=0\}$ being also a regular algebraic manifold of ${\mathbb{R}}^{k}$, we have a {\rm (}$k+k^{\prime}-1${\rm )}-dimensional regular algebraic manifold of ${\mathbb{R}}^{k+k^{\prime}}$ which is connected, represented as $\{(x,y) \in {\mathbb{R}}^{k+k^{\prime}} \mid F(x)-{||y||}^2=0\}$, and also the zero set of $F(x)-{||y||}^2$. We can also consider the semi-algebraic set $\{(x,y) \in {\mathbb{R}}^{k+k^{\prime}} \mid F(x)-{||y||}^2 \geq 0\}$ being also connected. 

We call this method a {\it unit-sphere-construction} or a {\it US-construction}.

\end{Def}

In Definition \ref{def:1}, the restriction of ${\pi}_{k+k^{\prime},k}$ to $\{(x,y) \in {\mathbb{R}}^{k+k^{\prime}} \mid F(x)-{||y||}^2=0\}$ is a special generic map thanks to some discussion from \cite{kollar} such as \cite[Discussion 14]{kollar}, with \cite{kitazawa2}.

Remember the definitions of edges and vertices of a graph. Each edge is, by definition, an open set of the graph. Hereafter, the closure of an edge of the graph is chosen in the graph.

\begin{Thm}
\label{thm:3}
Let $m>2$ be an integer. Let $m^{\prime} \geq 1$ be the maximal integer satisfying $m^{\prime} \leq \frac{m-1}{2}$.

Let $G$ be a graph having exactly $i+1$ vertices with $i+1>2$. Let $\{v_j\}_{j=1}^{i+1}$ be the family of the $i+1$ vertices.
Each of the closures of edges of $G$ connects $v_j$ and $v_{j+1}$ for some $1 \leq j \leq i$ or $v_{i+1}$ and $v_1$. There exist $a_j>0$ edges whose closure connects $v_j$ and $v_{j+1}$ for each $1 \leq j \leq i$, and $a_{i+1}>0$ edges whose closure connects $v_{i+1}$ and $v_{1}$.

To each edge $e_{j,j^{\prime}}$ in the family $\{e_{j,j^{\prime}}\}_{j^{\prime=1}}^{a_j}$ the closure of each of which connects $v_j$ and $v_{j+1}$ {\rm (}$1 \leq j \leq i${\rm )} and each edge $e_{i+1,j^{\prime}}$ in the family $\{e_{i+1,j^{\prime}}\}_{j^{\prime=1}}^{a_{i+1}}$ the closure of each of which connects $v_{i+1}$ and $v_{1}$, a sequence $\{a_{e_{j,j^{\prime}},j^{\prime \prime}}\}_{j^{\prime \prime}=1}^{m^{\prime}}$ of non-negative integers of length $m^{\prime}$ is assigned obeying the following rule{\rm :} if $a_j=a_{j+1}=1$, then it does not hold that $a_{e_{j,1,j^{\prime \prime}}}=a_{e_{j+1,1,j^{\prime \prime}}}=0$ for any $1 \leq j^{\prime \prime} \leq m^{\prime}$, and if $a_{i+1}=a_{1}=1$, then it does not hold that $a_{e_{i+1,1,j^{\prime \prime}}}=a_{e_{1,1,j^{\prime \prime}}}=0$ for any $1 \leq j^{\prime \prime} \leq m^{\prime}$.

Then, there exist an $m$-dimensional closed and connected manifold $M \subset {\mathbb{R}}^{m+1}$ which is also a regular real algebraic manifold and the zero set of a real polynomial function of degree $2{\Sigma}_{j=1}^{i+1} ({\Sigma}_{j^{\prime}=1}^{a_j} ({\Sigma}_{j^{\prime \prime}=1}^{m^{\prime \prime}} a_{e_{j,j^{\prime},j^{\prime \prime}}}))+2{\Sigma}_{j_1=1}^{i+1}(a_{j_1}-1)+4$ and a real algebraic map $f_{C,N(C)}:M \rightarrow N(C)$ with a function ${\pi}_{C,N(C)} {\mid}_{N(C)} \circ f_{C,N(C)}:M \rightarrow C$ enjoying the following where the notation and the situation of Example \ref{ex:1} {\rm (}\ref{ex:1.2}{\rm )} are considered.
\begin{enumerate}
\item The function is Morse.
\item The Reeb graph of the function is isomorphic to $G$.
\item The preimage ${q_{{\pi}_{C,N(C)} {\mid}_{N(C)} \circ f_{C,N(C)}}}^{-1}(p)$ {\rm (}$p \in e_{j,j^{\prime}}${\rm )} is diffeomorphic to a manifold represented as a connected sum ${\sharp}_{j^{\prime \prime}=1}^{m^{\prime}} {\sharp}_{j^{\prime \prime \prime}=1}^{a_{e_{j,j^{\prime}},j^{\prime \prime}}} (S^{j^{\prime \prime}} \times S^{m-j^{\prime \prime}-1})$.
\end{enumerate}
\end{Thm}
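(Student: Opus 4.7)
The plan is to extend the planar construction from the proof of Theorem~\ref{thm:2} by adding, for each edge $e_{j,j'}$ of $G$, additional algebraic obstacles inside the strip of $N(C)$ corresponding to that edge; these obstacles will, after application of the US-construction of Definition~\ref{def:1}, introduce the prescribed $S^{j''}\times S^{m-j''-1}$ connected summands into the preimage over each interior point of $e_{j,j'}$.

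First, I would reproduce the base configuration from the proof of Theorem~\ref{thm:2}: place the vertices $v_j$ at $(\cos(2j\pi/(i+1)),\sin(2j\pi/(i+1)))$ on $C=S^1$, fix a small $0<a<1$, and insert $a_{j_1}-1$ tangent disks in the sector between consecutive vertices so as to realize the graph combinatorics. The resulting compact semi-algebraic region is $R_0=\{F_0\ge 0\}\subset N(C)$, where $F_0$ is the product of the defining polynomials of $\partial N(C)$ and of the inserted circles, of total degree $2\sum_{j_1}(a_{j_1}-1)+4$.

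Next, for each edge $e_{j,j'}$ and each pair $(j'',j''')$ with $1\le j''\le m'$ and $1\le j'''\le a_{e_{j,j'},j''}$, I would insert an additional small circle inside the arc-strip of $R_0$ corresponding to $e_{j,j'}$, tangent appropriately to the neighbouring boundary components. The index $j''$ is encoded geometrically by locating the circle relative to a chosen coordinate splitting of the fiber space $\mathbb{R}^{m-1}=\mathbb{R}^{j''}\times\mathbb{R}^{m-1-j''}$ of the US-construction, so that the handle it carves out in the lifted manifold has attaching index $j''$. Multiplying the defining degree-$2$ polynomial of each such circle into $F_0$ yields a polynomial $F$ of the degree announced in the statement. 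I then set
\begin{equation*}
M:=\{(x,y)\in\mathbb{R}^2\times\mathbb{R}^{m-1}\mid F(x)-\|y\|^2=0\},\qquad f_{C,N(C)}:=\pi_{m+1,2}\mid_M,
\end{equation*}
which by Definition~\ref{def:1} is a regular real algebraic $m$-manifold.

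Finally, I would verify the three listed properties. The Morse condition and the Reeb-graph isomorphism with $G$ follow, as in Theorems~\ref{thm:1} and~\ref{thm:2}, once the new circles are placed in sufficiently generic position so that all critical values remain distinct; the forbidden configurations $a_{e_{j,1,j''}}=a_{e_{j+1,1,j''}}=0$ (when $a_j=a_{j+1}=1$) are precisely excluded in order to guarantee that each vertex of $G$ corresponds to a bona fide singular preimage and that no edge collapses. The main obstacle is item~(3): proving that an extra circle of the $j''$-th family contributes exactly an $S^{j''}\times S^{m-j''-1}$ summand to the preimage and not some other sphere product. This reduces to a local Morse-theoretic analysis of $F(x)-\|y\|^2=0$ in a neighbourhood of the new circle, where $F$ factors as a non-vanishing term times a single degree-$2$ polynomial; using the chosen splitting of $\|y\|^2=\|y'\|^2+\|y''\|^2$ with $y'\in\mathbb{R}^{j''}$ and $y''\in\mathbb{R}^{m-1-j''}$, the local model becomes a standard quadric and the handle index, hence the diffeomorphism type of the preimage modification, can be read off explicitly. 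Iterating this over all edges and all $(j'',j''')$ gives the claimed connected-sum decomposition and completes the argument.
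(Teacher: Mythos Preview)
Your construction has a genuine gap in item~(3), and it lies exactly where you flag the ``main obstacle.'' You propose to place \emph{all} the extra circles for the data $(j'',j''')$ in the planar region $R_0\subset\mathbb{R}^2$, multiply their degree-$2$ factors into $F_0$ to get a single polynomial $F$ of two variables, and then perform a single US-construction $M=\{F(x)-\|y\|^2=0\}\subset\mathbb{R}^{2}\times\mathbb{R}^{m-1}$. But the resulting hypersurface is invariant under the full orthogonal action of $O(m-1)$ on the $y$-coordinates; the equation $F(x)=\|y\|^2$ simply does not see any splitting $\|y\|^2=\|y'\|^2+\|y''\|^2$. Consequently the sentence ``the index $j''$ is encoded geometrically by locating the circle relative to a chosen coordinate splitting of the fiber space'' has no content: a circle removed from the base $\mathbb{R}^2$ always produces, over a transverse arc, an $S^1\times S^{m-2}$ summand in the preimage, never an $S^{j''}\times S^{m-j''-1}$ with $j''\ge 2$. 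Your local-model computation cannot salvage this, because any splitting of $y$ is an isometry of the equation and hence cannot change the handle index.

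The paper avoids this by an \emph{iterated} US-construction rather than a single one. Starting from the planar region, it first lifts by one dimension (Definition~\ref{def:1} with $(k,k')=(2,1)$) to obtain a $3$-dimensional body $D_C\subset\mathbb{R}^3$; then, inductively for $j_3=1,\dots,m'$, it removes \emph{ellipsoids} (not circles) $E_{a,\{j_i\},j_4}$ from the interior of the current $n$-dimensional body and applies the US-construction again with $(k,k')=(n,1)$, incrementing $n$ by one at each step, and finally with $(k,k')=(n,m-n+1)$ at the last step. An ellipsoid removed at the stage where the ambient dimension is $n=j_3+2$ contributes, after the remaining lifts, an $S^{j_3}\times S^{m-j_3-1}$ summand to the fiber; the index is determined by the \emph{stage} at which the obstacle is removed, not by any splitting of the fiber coordinates. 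This staged mechanism is precisely what your one-shot planar construction is missing.
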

\begin{proof}
%We first discuss the case {\rm (}\ref{thm:2.1}{\rm )}.
First, this respects our main result of \cite{kitazawa4} and its proof, which is a variant in the case "$C:=\mathbb{R}$" instead of "$C:=S^1$". We do not assume the arguments there.

As in the proof of Theorem \ref{thm:2}, to each vertex $v_j$, we correspond $(\cos \frac{2j\pi}{i+1},\sin \frac{2j\pi}{i+1}) \in {\mathbb{R}}^2$.
We can choose a suitable small real number $0<a<1$ and the following mutually disjoint circles $C_{a,j_1,j_2}$ and $C_{a,\{j_i\}_{i=1}^{3},j_4}$ each of which bounds the compact disk $D_{C_{a,j_1,j_2}} \subset N_{\rm Int} (C)$ with these disks being mutually disjoint and the compact disk $D_{C_{a,\{j_i\}_{i=1}^{3},j_4}} \subset N_{\rm Int} (C)$ with these disks being mutually disjoint and disjoint from the disks $D_{C_{a,j_1,j_2}}$.
\begin{itemize}
\item For each integer $1 \leq j_1 \leq i+1$, we can choose exactly $a_{j_1}-1$ circles $C_{a,j_1,j_2}$ of suitable radii each of which is labeled by an integer $1 \leq j_2 \leq a_{j_1}-1$ and contained in the sector formed and surrounded by $\{(r\cos \frac{2j_1\pi}{i+1},r\sin \frac{2j_1\pi}{i+1}) \in {\mathbb{R}}^2 \mid r \geq 0\}$ and $\{(r\cos \frac{2(j_1+1)\pi}{i+1},r\sin \frac{2(j_1+1)\pi}{i+1})  \in {\mathbb{R}}^2 \mid r \geq 0\}$ in such a way that the circle and $\{(t\cos \frac{2j_1\pi}{i+1},t\sin \frac{2j_1\pi}{i+1}) \in {\mathbb{R}}^2 \mid t \in \mathbb{R} \}$ are mutually tangent at a point and that the circle and $\{(t\cos \frac{2(j_1+1)\pi}{i+1},t\sin \frac{2(j_1+1)\pi}{i+1}) \in {\mathbb{R}}^2  \mid t \in \mathbb{R} \}$ are mutually tangent at another point.
\item For each integer $1 \leq j_1 \leq i+1$ and each integer $1 \leq j_2 \leq a_{j_1}$, we have exactly ${\Sigma}_{j_3=1}^{m^{\prime}} a_{e_{j_1,j_2},j_3}$ circles $C_{a,\{j_i\}_{i=1}^{3},j_4}$ of suitable radii each of which is labeled by an integer $1 \leq j_4 \leq a_{e_{j_1,j_2},j_3}$ with each integer $1 \leq j_3 \leq m^{\prime}$ and contained in the bounded region explained in the following. \\
\ \\
Case A. $j_2=1<a_{j_1}$. \\
\begin{itemize}
\item $\{(r\cos \frac{2j_1\pi}{i+1},r\sin \frac{2j_1\pi}{i+1}) \in {\mathbb{R}}^2 \mid r \geq 0\}$.
\item $\{(r\cos \frac{2(j_1+1)\pi}{i+1},r\sin \frac{2(j_1+1)\pi}{i+1})  \in {\mathbb{R}}^2 \mid r \geq 0\}$.
\item $\{x \in {\mathbb{R}}^2 \mid ||x||=1-a\}$.
\item $C_{a,j_1,1}$.
\end{itemize}
\ \\
Case B. $1<j_2<a_{j_1}$. \\
\begin{itemize}
\item $\{(r\cos \frac{2j_1\pi}{i+1},r\sin \frac{2j_1\pi}{i+1}) \in {\mathbb{R}}^2 \mid r \geq 0\}$.
\item $\{(r\cos \frac{2(j_1+1)\pi}{i+1},r\sin \frac{2(j_1+1)\pi}{i+1})  \in {\mathbb{R}}^2 \mid r \geq 0\}$.
\item $C_{a,j_1,j_2-1}$.
\item $C_{a,j_1,j_2}$.
\end{itemize}
\ \\
Case C. $1<j_2=a_{j_1}$. \\
\begin{itemize}
\item $\{(r\cos \frac{2j_1\pi}{i+1},r\sin \frac{2j_1\pi}{i+1}) \in {\mathbb{R}}^2 \mid r \geq 0\}$.
\item $\{(r\cos \frac{2(j_1+1)\pi}{i+1},r\sin \frac{2(j_1+1)\pi}{i+1})  \in {\mathbb{R}}^2 \mid r \geq 0\}$.
\item $\{x \in {\mathbb{R}}^2 \mid ||x||=1+a\}$.
\item $C_{a,j_1,j_2-1}$.
\end{itemize}
\ \\
Case D. $j_2=1=a_{j_1}$. \\
\begin{itemize}
\item $\{(r\cos \frac{2j_1\pi}{i+1},r\sin \frac{2j_1\pi}{i+1}) \in {\mathbb{R}}^2 \mid r \geq 0\}$.
\item $\{(r\cos \frac{2(j_1+1)\pi}{i+1},r\sin \frac{2(j_1+1)\pi}{i+1})  \in {\mathbb{R}}^2 \mid r \geq 0\}$.
\item $\{x \in {\mathbb{R}}^2 \mid ||x||=1-a\}$.

\item $\{x \in {\mathbb{R}}^2 \mid ||x||=1+a\}$.

\end{itemize}
\ \\
\item Each circle $C_{a,\{j_i\}_{i=1}^{3},j_4}$ and $\{(t\cos \frac{2j_1\pi}{i+1},t\sin \frac{2j_1\pi}{i+1}) \in {\mathbb{R}}^2 \mid t \in \mathbb{R} \}$ are mutually tangent at a point and this circle and $\{(t\cos \frac{2(j_1+1)\pi}{i+1},t\sin \frac{2(j_1+1)\pi}{i+1}) \in {\mathbb{R}}^2  \mid t \in \mathbb{R} \}$ are mutually tangent at another point.
\end{itemize}
%We can choose exactly $1$ circle of a suitable radius contained in the sector formed and surrounded by $\{(x,0) \mid x \geq 0\}$ and $\{(r\cos \frac{\pi}{i+1},r\sin \frac{\pi}{i+1}) \mid r \geq 0\}$ in such a way that the circle and $\{(x,0) \mid x \in %\mathbb{R}\}$ are mutually tangent at a point and that the circle and $\{(t\cos \frac{\pi}{i+1},t\sin \frac{\pi}{i+1}) \mid t \in \mathbb{R} \}$ are mutually tangent at another point.
%\item We can choose exactly $1$ circle of a suitable radius contained in the sector formed and surrounded by $\{(x,0) \mid x \leq 0\}$ and $\{(r\cos \frac{i\pi}{i+1},r\sin \frac{i\pi}{i+1}) \in {\mathbb{R}}^2 \mid r \geq 0\}$ in such a way that the circle %%\item For each integer $1 \leq j \leq i$, we can choose exactly $a_j-1$ circles of suitable radii contained in the sector formed and surrounded by $\{(r\cos \frac{j\pi}{i+1},r\sin \frac{j\pi}{i+1})  \in {\mathbb{R}}^2 \mid r \geq 0\}$ and $\{(r\cos \frac{(j+1)%\pi}{i+1},r\sin \frac{(j+1)\pi}{i+1}) \in {\mathbb{R}}^2 \mid r \geq 0\}$ in such a way that the circle and $\{(t\cos \frac{j\pi}{i+1},t\sin \frac{j\pi}{i+1}) \in {\mathbb{R}}^2 \mid t \in \mathbb{R} \}$ are mutually tangent at a point and that the circle and $\%{(t\cos \frac{(j+1)\pi}{i+1},t\sin \frac{(j+1)\pi}{i+1})  \in {\mathbb{R}}^2 \mid t \in \mathbb{R} \}$ are mutually tangent at another point.
We can discuss our desired construction in the following way. 
\begin{itemize}
\item We remove the interiors of the disks $D_{C_{a,j_1,j_2}}$ from $N(C)$ and for the resulting connected semi-algebraic set in ${\mathbb{R}}^2$, we can consider the US construction as in Definition \ref{def:1} by $(k,k^{\prime})=(2,1)$ with the degree of $F$ being $2{\Sigma}_{j_1=1}^{i+1}(a_{j_1}-1)+4$. Let $D_C \subset {\mathbb{R}}^3$ denote the resulting semi-algebraic set being also compact and connected.
\item After that, we discuss our construction inductively for each integer $1 \leq j_3 \leq m^{\prime}$ as follows.
First we put $n=3$ and $j_3=1$. 
\begin{itemize}
\item We choose a suitable ellipsoid $E_{a,\{j_i\}_{i=1}^{3},j_4}$ embedded in the interior of $D_C$ considered in ${\mathbb{R}}^n$ and mapped onto each disk $D_{a,\{j_i\}_{i=1}^{3},j_4}$ by the projection ${\pi}_{n,2}$, for each $j_3$ from $D_C$.
\item \begin{itemize}
\item If $j_3<m^{\prime}$, then for the resulting connected semi-algebraic set in ${\mathbb{R}}^n$, we can consider the US construction of Definition \ref{def:1} with $(k,k^{\prime})=(n,1)$ with the degree of $F$ being $2{\Sigma}_{j=1}^{i+1} ({\Sigma}_{j^{\prime}=1}^{a_j} ({\Sigma}_{j^{\prime \prime}=1}^{j_3} a_{e_{j,j^{\prime},j^{\prime \prime}}}))+2{\Sigma}_{j_1=1}^{i+1}(a_{j_1}-1)+4$. We put $D_C \subset {\mathbb{R}}^{n+1}$ the resulting new semi-algebraic set being also compact and connected newly instead of "the previously defined $D_C$".
We define $n$ as $n+1$ newly, instead. We define $j_3$ as $j_3+1$ newly, instead. 
\item If $j_3=m^{\prime}$, then for the resulting connected semi-algebraic set in ${\mathbb{R}}^n$, we can consider the US construction of Definition \ref{def:1} with $(k,k^{\prime})=(n,m-n+1)$ with the degree of $F$ being $2{\Sigma}_{j=1}^{i+1} ({\Sigma}_{j^{\prime}=1}^{a_j} ({\Sigma}_{j^{\prime \prime}=1}^{m^{\prime \prime}} a_{e_{j,j^{\prime},j^{\prime \prime}}}))+2{\Sigma}_{j_1=1}^{i+1}(a_{j_1}-1)+4$. Let $W \subset {\mathbb{R}}^{m+1}$ denote the resulting new semi-algebraic set being also compact and connected newly and $M$ denote the boundary of the region $W$.
\end{itemize}
\end{itemize}
\end{itemize}
Thus we have our desired map $f:M \rightarrow N(C) \subset {\mathbb{R}}^2$ by ${\pi}_{n,2} {\mid}_{D_C} \circ {\pi}_{m+1,n} {\mid}_{M}={\pi}_{m+1,2} {\mid}_{M}$. We explain some more precisely.

We explain the preimage ${q_{{\pi}_{C,N(C)} {\mid}_{N(C)} \circ f_{C,N(C)}}}^{-1}(p)$ {\rm (}$p \in e_{j,j^{\prime}}${\rm )}. The image ${\pi}_{m+1,n}({q_{{\pi}_{C,N(C)} {\mid}_{N(C)} \circ f_{C,N(C)}}}^{-1}(p))$ is regarded as a manifold diffeomorphic to the boundary connected sum ${\natural}_{j^{\prime \prime}=1}^{m^{\prime}} {\natural}_{j^{\prime \prime \prime}=1}^{a_{e_{j,j^{\prime}},j^{\prime \prime}}} (S^{j^{\prime \prime}} \times D^{n-j^{\prime \prime}-1})$. The restriction of ${\pi}_{m+1,n}$ to the preimage ${q_{{\pi}_{C,N(C)} {\mid}_{N(C)} \circ f_{C,N(C)}}}^{-1}(p)$ {\rm (}$p \in e_{j,j^{\prime}}${\rm )} is regarded as a special generic map into the preimage ${{\pi}_{n,2}}^{-1}(\{(r\cos \frac{2(j+\theta)\pi}{i+1},r\sin \frac{2(j+\theta)\pi}{i+1}) \in {\mathbb{R}}^2 \mid r>0\})$ of some straight open interval $\{(r\cos \frac{2(j+\theta)\pi}{i+1},r\sin \frac{2(j+\theta)\pi}{i+1}) \in {\mathbb{R}}^2 \mid r>0\}$ with $0<\theta<1$. We can easily see that ${{\pi}_{n,2}}^{-1}(\{(r\cos \frac{2(j+\theta)\pi}{i+1},r\sin \frac{2(j+\theta)\pi}{i+1}) \in {\mathbb{R}}^2 \mid r>0\})$ is diffeomorphic to ${\mathbb{R}}^{n-1}$. We can also understand the type (the topology and the differentiable structure) of the preimage  ${q_{{\pi}_{C,N(C)} {\mid}_{N(C)} \circ f_{C,N(C)}}}^{-1}(p)$ {\rm (}$p \in e_{j,j^{\prime}}${\rm )}.
We can investigate this as a kind of exrecises on singularity theory related to differential topology or special generic maps. Checking \cite{saeki1} with \cite{kitazawa7} may also help us to understand this, where we do not assume mathematical knowledge and experience related to the preprint \cite{kitazawa7}.

Note also that the conditions on the values $a_{e_{j,j^{\prime}},j^{\prime \prime}}$ are to construct our function with some singular points in each preimage ${f_{C,N(C)}}^{-1}(\{(r\cos \frac{2j\pi}{i+1},r\sin \frac{2j\pi}{i+1}) \in {\mathbb{R}}^2 \mid r \geq 0\})$ for each integer $0 \leq j \leq i+1$.
% We review the fundamental property ${\pi}_{C,N(C)} {\mid}_{N(C)} \circ f_{C,N(C)}=\overline{{\pi}_{C,N(C)} {\mid}_{N(C)} \circ f_{C,N(C)}} \circ q_{{\pi}_{C,N(C)} {\mid}_{N(C)} \circ f_{C,N(C)}}$ with the notation for the unique natural continuous function $\overline{{\pi}_{C,N(C)} {\mid}_{N(C)} \circ f_{C,N(C)}}$. 

This completes our proof.

\end{proof}
\section{Additional comments.}
This is a kind of our additional remark.
 
Hereafter, rigorously, {\it real-valued} functions mean functions whose values are always real numbers or elements of $\mathbb{R}$. {\it Circle-valued} functions mean functions whose values are always elements of $S^1$. {\it Curve-valued} functions have been used for real algebraic maps into $1$-dimensional real algebraic manifolds and we use this terminology.

Most of the comments are related to Morse(-Bott) functions on compact manifolds, mainly ones on compact surfaces.

This paper studies extensions of several affirmative answers on reconstructing real algebraic real-valued Morse functions to curve-valued cases.

We first introduce related studies on differentiable (smooth) real-valued Morse(-Bott) functions and extensions to the circle-valued cases. 
\subsection{From real-valued Morse(-Bott) functions and their Reeb graphs in the differentiable situations to circle-valued versions.} 

\subsubsection{Circle-valued Morse functions and their Reeb graphs.}
In the differentiable (smooth) situations, it is important to extend some affirmative facts on reconstruction of nice smooth real-valued functions to circle-valued cases or find variants of facts on the real-valued function case.
For example, Theorem \ref{thm:3} holds.
\begin{Thm}[\cite{gelbukh1}]
\label{thm:4}
Every graph is homeomorphic to the Reeb graph of some real-valued Morse-Bott function on some closed and connected surface and also homeomorphic to the Reeb graph of some real-valued Morse-Bott function on some closed and connected manifold of dimension $m>2$ where $m>2$ is an arbitrary integer greater than $2$.
\end{Thm}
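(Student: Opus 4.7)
The plan is to build, for a given finite connected graph $G$, an explicit closed connected $m$-manifold $M$ together with a Morse-Bott function $\phi \colon M \to \mathbb{R}$ whose Reeb graph is homeomorphic to $G$, by assembling local models along edges and at vertices. I first treat the surface case $m=2$ and then reduce the higher-dimensional case to it by a product construction. Since the conclusion is homeomorphism (not isomorphism) of graphs, every degree-$2$ vertex of $G$ may be absorbed into an incident edge, so I may assume all vertices of $G$ have degree $1$ or $\geq 3$.

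For the surface case, choose a piecewise linear $h \colon G \to \mathbb{R}$ that is linear on each edge and injective on the vertex set. To each edge $e$ with endpoints at heights $h_0<h_1$, associate the cylinder $S^1 \times [h_0,h_1]$ projected to the second factor. To each vertex $v$ of degree $1$, attach a disk $D^2$ carrying a single Morse extremum at $h(v)$. To each vertex $v$ of degree $d \geq 3$, attach a genus-$0$ surface $\Sigma_{d_+,d_-}$ with $d$ boundary circles, split into $d_+$ upper boundaries (for edges incident to $v$ with $h>h(v)$) and $d_-$ lower boundaries (for edges with $h<h(v)$), carrying a Morse function with exactly $d-2$ saddles placed at the common critical value $h(v)$, and arranged so that their figure-eight critical loci lie in a single connected component of $\phi^{-1}(h(v))$. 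This single level-set component produces one Reeb graph vertex of degree exactly $d$ at height $h(v)$. Gluing all these pieces along matching boundary circles yields a closed surface $\Sigma$ with a Morse-Bott function $\phi \colon \Sigma \to \mathbb{R}$ whose Reeb graph is homeomorphic to $G$.

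For the higher-dimensional case $m>2$, set $M := \Sigma \times S^{m-2}$ and take the function $\phi \circ \pi_1$, where $\pi_1 \colon M \to \Sigma$ is the first projection. Since $\pi_1$ is a submersion, the critical set of $\phi \circ \pi_1$ is exactly $S(\phi) \times S^{m-2}$, and the Morse-Bott normal form of $\phi$ transverse to each critical submanifold lifts verbatim, so $\phi \circ \pi_1$ is again Morse-Bott. Because $S^{m-2}$ is connected for $m \geq 3$, each preimage $(\phi \circ \pi_1)^{-1}(t) = \phi^{-1}(t) \times S^{m-2}$ has the same set of connected components as $\phi^{-1}(t)$, and a point is critical in a fiber iff the corresponding point is critical for $\phi$; hence the Reeb graph is unchanged. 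The product $M$ is closed and connected of dimension $m$, as required.

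The main obstacle is the degree-$d$ vertex construction for $d \geq 3$: one must place the $d-2$ Morse saddles on $\Sigma_{d_+,d_-}$ so that their critical loci at height $h(v)$ fit together into a single connected component of $\phi^{-1}(h(v))$, not merely a union of $d-2$ figure-eights. Equivalently, the dual graph recording which saddle connects which pair of previously disjoint level components must be a tree, and that tree must span all $d$ boundary circles. This can be realized by ordering the saddles along a path and constructing $\Sigma_{d_+,d_-}$ inductively, each saddle attaching to the surface produced by the previous ones so as to join two still-separated components of the critical level. Once this combinatorial arrangement is fixed, the remaining gluing and smoothing is routine in the $C^\infty$ category, and the verification that the Reeb graph of $\phi$ is homeomorphic to $G$ reduces to checking the local Reeb graph at each vertex, as described above.
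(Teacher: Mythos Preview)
The paper does not prove this statement; it is quoted from \cite{gelbukh1} as background in the survey section, so there is no proof in the paper to compare yours against.

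Your surface construction has a genuine gap at the global extrema of your height function $h$. Whatever injective $h\colon V(G)\to\mathbb{R}$ you pick, the vertex $v$ realizing the global minimum of $h$ has $d_-=0$, and if this $v$ has degree $d\ge 3$ --- as is forced, for instance, when $G=K_4$, where every vertex has degree $3$ --- your local model ``genus-$0$ surface $\Sigma_{d_+,d_-}$ with exactly $d-2$ saddles at the common critical value $h(v)$'' cannot be built. The piece you attach at $v$ would be a compact surface whose boundary circles all lie at heights strictly above $h(v)$, so the restriction of the function to this piece attains its minimum in the interior at height $h(v)$; but a Morse saddle at height $h(v)$ has nearby points strictly below $h(v)$, so no saddle can sit at the minimum level. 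The only Morse--Bott singular sets that can occupy that level, and hence the only connected critical fibers you can produce there, are a single minimum point (Reeb degree $1$) or a single minimum circle (Reeb degree $2$), never a Reeb vertex of degree $\ge 3$. This is exactly the content of the remark the paper makes immediately after the statement: a graph all of whose vertices have degree $\ge 3$ is never \emph{isomorphic} to the Reeb graph of a Morse--Bott function on a closed surface.

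The repair uses the homeomorphism hypothesis in the direction opposite to what you did: rather than absorbing degree-$2$ vertices, you should \emph{introduce} them by subdividing edges of $G$, and choose $h$ so that its local extrema land on these new degree-$2$ vertices, where they are realized by Morse--Bott extremal circles. With that adjustment your cobordism assembly goes through, and your product reduction $M=\Sigma\times S^{m-2}$ for $m>2$ is correct as written.
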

For example, if the degrees of vertices of the graph are at least $3$, then the graph is not isomorphic to the Reeb graph of any Morse-Bott function on any closed and connected surface. On the other hand, we have the following in the circle-valued case.
\begin{Thm}[\cite{gelbukh2, gelbukh3}]
\label{thm:5}
Every graph is isomorphic to the Reeb graph of some circle-valued Morse-Bott function on some closed and connected surface and also isomorphic to the Reeb graph of some real-valued Morse-Bott function on some closed and connected manifold of dimension $m>2$ where $m>2$ is an arbitrary integer greater than $2$.
\end{Thm}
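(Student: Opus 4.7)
The plan is to build the realizing manifold and Morse--Bott function edge-by-edge and vertex-by-vertex, following the strategy Gelbukh uses in \cite{gelbukh2, gelbukh3}. The circle-valued target removes the obstruction, present for Theorem \ref{thm:4} in dimension $2$, that every real-valued Morse(-Bott) function on a closed surface must have at least one minimum and one maximum; on $S^1$ there is no preferred ``bottom'' or ``top,'' and this is exactly what lets graphs with no degree-$1$ vertices be realized on the nose.

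For the surface part, I would first produce a continuous map $\psi:G\to S^1$ which is injective on the vertex set and strictly monotone (with respect to a chosen local orientation of $S^1$) on each open edge. Such a $\psi$ exists for every connected graph $G$: pick a spanning tree $T$, embed its image into an arc of $S^1$ respecting an ordering of its vertices, and then send each remaining edge once around $S^1$, perturbing the parametrizations so that the restriction to the vertex set stays injective. Over each open edge $e$ of $G$, I would attach a cylinder $S^1\times e$ with the height function $S^1\times e\to e\xrightarrow{\psi|_e} S^1$. Over each vertex $v$ of degree $d=d(v)$, I would attach a local surface-with-boundary model $P_v$ whose boundary is a disjoint union of $d$ circles and which carries a Morse--Bott function with exactly one critical level matching the value $\psi(v)$: a disk for $d=1$ (a genuine extremum), a cylinder with a Bott critical circle for $d=2$, a pair of pants with a single saddle for $d=3$, and, for $d\ge 3$ more generally, a small regular neighborhood of a wedge of $d-1$ circles equipped with a Morse--Bott function whose unique critical level is that wedge. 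Gluing these building blocks along matching boundary circles and matching boundary values gives a closed connected surface $\Sigma$ and a Morse--Bott function $f:\Sigma\to S^1$ whose Reeb graph is isomorphic to $G$ as a graph (not merely homeomorphic to $|G|$), since the map $\psi$ was chosen vertex-injective.

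For the higher-dimensional part, I would upgrade each block by replacing the fiber circle by a closed connected $(m-1)$-manifold and using handle attachments to realize the vertex cobordisms. Concretely, over each edge I use $F\times e$ for a closed connected $F$ of dimension $m-1$ (for example $F=S^{m-1}$), and over a degree-$d$ vertex $v$ I use a connected cobordism between $d$ copies of $F$ whose Morse--Bott function has a single critical level at height $\psi(v)$, built out of finitely many handles of indices in $\{1,\dots,m-1\}$ stacked at the same critical value. In dimension $m>2$ there is always enough room to realize any incidence pattern in this way (as in the constructions of Masumoto--Saeki and Michalak quoted in the introduction), and the assembled map is either kept as $\Sigma\to S^1$ or, by lifting $\psi$ across a chord of $S^1$, converted into a real-valued Morse--Bott variant on an $m$-dimensional closed connected manifold, giving both clauses of Theorem \ref{thm:5} simultaneously.

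The main obstacle is the realization of a single vertex of arbitrary degree $d$ by a local Morse--Bott model with a connected critical level. In dimension $2$ a disk, an annulus and a pair of pants only cover $d\le 3$, so for $d>3$ one is genuinely forced to use the Morse--Bott (rather than Morse) setting, and the critical set must be taken to be a disjoint union of circles whose local monodromy realizes exactly $d$ incident Reeb-graph edges; this is where the argument of \cite{gelbukh2, gelbukh3} does the real work, and where our circle-valued construction diverges from the Morse-function theory of \cite{sharko} and \cite{masumotosaeki}. Once that local model is in hand and the vertex-injectivity of $\psi$ is arranged, checking that the combinatorial Reeb graph of $f$ equals $G$ reduces to a routine inspection of each block.
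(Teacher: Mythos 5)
The paper does not actually prove Theorem \ref{thm:5}: it is quoted from \cite{gelbukh2, gelbukh3} as background for the differentiable theory, so there is no in-paper argument to compare yours against and your proposal must stand on its own as a reconstruction of Gelbukh's proof. It has the right overall shape (cylinders over edges glued to vertex blocks along a vertex-injective, edgewise-immersed map $\psi:G\to S^1$), but it has a genuine gap at the decisive point. Requiring $\psi$ to be injective on vertices and monotone on each open edge is not enough: whether a vertex block exists is governed by the partition of the $d$ edge-ends at a vertex $v$ into those along which $\psi$ increases away from $v$ and those along which it decreases, i.e.\ a profile $(d_-,d_+)$. On a closed surface a connected critical level of a Morse--Bott function cannot have all of its $d\ge 3$ ends on one side (a level containing a saddle has nearby points on both sides, while an extremal critical point or circle has at most two ends, all on one side), and your degree-$2$ model ``cylinder with a Bott critical circle'' realizes only $(2,0)$ and $(0,2)$, never $(1,1)$ --- yet $(1,1)$ is forced whenever a loop is attached to a degree-$2$ vertex, because an edge immersed in $S^1$ always contributes one ascending end and one descending end, and $(1,1)$ requires a saddle level (a two-saddle level on a torus, or a one-saddle level with non-orientable neighborhood as for the middle vertex of $\mathbb{RP}^2$). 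Since the immersion forces exactly one up-end and one down-end per edge, what you must prove is a combinatorial lemma: the edges of $G$ can be oriented so that every vertex of degree $\ge 3$ has both in- and out-edges (an Euler-circuit type argument after adjoining an auxiliary vertex to the odd-degree ones). This is precisely the step where the circle-valued target buys you something, but you neither state nor prove it, and without it the gluing can simply fail --- if all three edge-ends at a trivalent vertex point in the positive direction, there is no surface piece to insert.

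Two further points. Your local model for $d\ge 4$, ``a regular neighborhood of a wedge of $d-1$ circles whose unique critical level is that wedge,'' is not Morse--Bott if read literally: a point of a level set where three or more branches meet is a degenerate critical point, so the level must instead be a connected graph carrying $d-2$ ordinary quadratic saddles at the same critical value; in particular Morse functions (with non-distinct critical values) already suffice, contrary to your closing claim that one is ``genuinely forced'' into the Morse--Bott setting for $d>3$, and \cite{gelbukh3} indeed realizes every graph by a Morse circle-valued function. Finally, for the second clause you propose to ``lift $\psi$ across a chord of $S^1$'' to obtain a real-valued function; this cannot work when $\psi$ is homotopically nontrivial, which your own construction requires whenever edges wrap around $S^1$ (e.g.\ for loops of $G$), so the higher-dimensional real-valued assertion needs an independent construction rather than a lift of the circle-valued one.
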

\subsubsection{Non-singular extensions of real-valued Morse functions on closed surfaces and extensions to the case of circle-valued ones.}
We introduce another study related to real-valued Morse functions and their Reeb graphs and extensions to the circle-valued cases, shortly.
{\it Non-singular extensions} of Morse functions are smooth functions with no singular points on some compact manifolds whose restrictions to the boundaries are the original functions. See \cite{curley} for related studies, for example. 
The existence of such functions is a kind of fundamental and important problems in singularity theory related to differential topology of manifolds. In the case of real-valued Morse functions on closed surfaces, a necessary and sufficient condition to solve the problem affirmatively is given in terms of Reeb graphs of the functions. \cite{iwakura} extends this to the circle-valued case in a certain natural way, by certain additional investigations.

We omit precise discussions on these studies and these results. We only present the following fact, which is easily shown.
\begin{Thm}
\label{thm:6}
For functions in Theorems \ref{thm:1} and \ref{thm:2}, we can have their non-singular extensions being also real algebraic maps on some connected and compact manifolds being also regarded as semi-algebraic sets of ${\mathbb{R}}^{m+1}$.

\end{Thm}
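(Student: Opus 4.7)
The plan is to take the same compact semi-algebraic region that served as the ``inside'' of $M$ in the US-construction of Theorems \ref{thm:1} and \ref{thm:2}, and to observe that the projection used to define the Reeb-graph function extends to it without acquiring any new singular points. Recall that $M \subset \mathbb{R}^{m+1}$ was produced as the zero set $\{(x,y) \in \mathbb{R}^2 \times \mathbb{R}^{m-1} \mid F(x) - {\|y\|}^2 = 0\}$ for a suitable real polynomial $F$ on $\mathbb{R}^2$, and the Reeb-graph function was the composition $\pi_{C,N(C)} \mid_{N_{\rm Int}(C)} \circ \pi_{m+1,2} \mid_M$.

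I would introduce the compact semi-algebraic set
\[
W := \{(x,y) \in \mathbb{R}^{m+1} \mid F(x) - {\|y\|}^2 \geq 0\},
\]
which is non-empty, connected, and has $M$ as its topological boundary in $\mathbb{R}^{m+1}$. Compactness is immediate: the region $\{x \in \mathbb{R}^2 \mid F(x) \geq 0\}$ is compact in $\mathbb{R}^2$ (it is the bounded region produced by the polynomial approximation of a regular neighborhood in the proof of Theorem \ref{thm:1}, or is the bounded region surrounded by circles in the proof of Theorem \ref{thm:2}), and the inequality ${\|y\|}^2 \leq F(x)$ forces $y$ to lie in a bounded subset of $\mathbb{R}^{m-1}$.

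The next step is to verify that $W$ is a smooth compact manifold with boundary $\partial W = M$. Its interior in $\mathbb{R}^{m+1}$ is the open set $\{F(x) - {\|y\|}^2 > 0\}$, hence a smooth $(m+1)$-dimensional manifold. Regularity of $M$ as a zero set of $F(x) - {\|y\|}^2$ (which was the very reason $M$ is a regular real algebraic manifold) guarantees that the gradient of $F(x) - {\|y\|}^2$ does not vanish along $M$, so $W$ is a manifold with smooth boundary $\partial W = M$. I then define
\[
\widehat{f} := \pi_{C,N(C)} \mid_{N_{\rm Int}(C)} \circ\, \pi_{m+1,2} \mid_W : W \longrightarrow C,
\]
which is a real algebraic map by the closure properties listed in the Introduction, and whose restriction to $\partial W = M$ is the original Reeb-graph function.

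The key point is that $\widehat{f}$ has no singular point. At an interior point of $W$ the tangent space is all of $\mathbb{R}^{m+1}$, so the differential of $\pi_{m+1,2}$ is already surjective onto $\mathbb{R}^2$. At a boundary point $(x,y) \in M$ the tangent space of $W$ is the half-space obtained by adjoining the inward conormal direction to $T_{(x,y)}M$; since $M$ is, by the US-construction, endowed with a special generic projection $\pi_{m+1,2}|_M$ whose differential already has rank $2$ on the non-singular part away from the folds and has rank $\geq 1$ everywhere, adjoining the normal direction $\partial/\partial\|y\|$ fills the remaining direction of $\mathbb{R}^2$, so the differential of $\pi_{m+1,2}|_W$ is surjective onto $\mathbb{R}^2$ there as well. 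Post-composing with the non-singular real algebraic submersion $\pi_{C,N(C)}|_{N_{\rm Int}(C)}$ preserves surjectivity of the differential, so $\widehat{f}$ is a non-singular real algebraic extension of the original function to the compact semi-algebraic manifold $W \subset \mathbb{R}^{m+1}$. The main technical point to check carefully is that the image $\pi_{m+1,2}(W) \subset \mathbb{R}^2$ lies in $N_{\rm Int}(C)$ so that the composition with $\pi_{C,N(C)}|_{N_{\rm Int}(C)}$ is defined; this is already arranged at the construction stage by choosing the neighborhood $N_0(G) \subset N_{\rm Int}(C)$ in the proofs of Theorems \ref{thm:1} and \ref{thm:2}.
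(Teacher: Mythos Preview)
Your proof is correct and follows essentially the same construction as the paper: form the filled-in semi-algebraic region $W=\{(x,y):F(x)-\|y\|^2\geq 0\}$ and restrict the same composition $\pi_{C,N(C)}\circ\pi_{m+1,2}$ to it. The paper's argument is a single sentence asserting exactly this; you have simply supplied the verifications (compactness, smoothness with boundary $M$, non-singularity of the extension) that the paper leaves implicit.
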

\begin{proof}
We only extend the function canonically by considering the semi-algebraic set $\{(x,y) \in {\mathbb{R}}^2 \times {\mathbb{R}}^{m-1}={\mathbb{R}}^{m+1} \mid f_{\tilde{c_G},\mathbb{R}}(x)-{||y||}^2 \geq 0\}$. The desired function is defined as the restriction of ${\pi}_{C,N(C)} {\mid}_{N(C)} \circ {\pi}_{m+1,2}$ there. This completes the proof.
\end{proof}
\subsection{Normal forms of smooth functions, mainly, ones on closed surfaces.}
\cite{feshchenko1, feshchenko2} studies a kind of normal forms of explicit Morse-Bott functions on closed and orientable surfaces which always have local extrema at their singular points or generalizations with respect to the local forms of the singular points of these Morse-Bott functions. 
More precisely, he has considered {\it the class ${\mathcal{F}}^0$} of smooth functions on closed and connected surfaces as follows. A smooth function $c:X \rightarrow C$ on a closed and orientable surface $X$ is of this class if and only if the following hold.
\begin{itemize}
\item The singular set $S(c)$ of $c$ is a disjoint union of isolated points and copies of $S^1$.
\item At each isolated singular point of $c$, it is represented by the form $c(x_1,x_2)=\pm ({x_1}^2+{x_2}^2)$ for suitable local coordinates.
\item At each circle $S_C$ of $S(c)$, it is represented by the form $c(x_1,x_2)=\pm ({x_1}^{n_{S_C}})$ for suitable local coordinates and a suitably chosen positive integer $n_{S_C} \geq 2$ depending on the component $S_C$.
\end{itemize}

In terms of our discussions, we can state some of the result of these studies of Feshchenko as follows.
\begin{Thm}[\cite{feshchenko1, feshchenko2}]
\label{thm:7}
Let $G$ be a connected graph with exactly one edge and two vertices, a circle {\rm (}, which has no vertex{\rm )}, or a graph which is homeomorphic to $S^1$ and has at least two edges and at least two vertices.
\begin{enumerate}
\item We can reconstruct a smooth function $f_{{\mathcal{F}}^0}$ onto some $1$-dimensional connected real algebraic manifold $C$ of the class ${\mathcal{F}}^0$ on some closed, connected and orientable surface $M$ whose Reeb graph is isomorphic to $G$.
\item We have the following.
\begin{enumerate}
\item If $G$ is a connected graph with exactly one edge and two vertices and a smooth function $f_{{\mathcal{F}}^0}:M \rightarrow C$ of the class ${\mathcal{F}}^0$ is reconstructed in the previous scene, then by choosing a suitable pair $(\Phi:S^2 \rightarrow M,\phi:C \rightarrow \mathbb{R})$ of diffeomorphisms, we have the canonical projection ${\pi}_{3,1} {\mid}_{S^2}$ as $\phi \circ f_{{\mathcal{F}}^0} \circ \Phi:S^2 \rightarrow \mathbb{R}$.
\item If $G$ is a circle {\rm (}, which has no vertex{\rm )} and a smooth function $f_{{\mathcal{F}}^0}:M \rightarrow C$ of the class ${\mathcal{F}}^0$ is reconstructed in the previous scene, then by choosing a suitable pair $(\Phi:S^1 \times S^1 \rightarrow M,\phi:C \rightarrow S^1)$ of diffeomorphisms, we have the projection ${\rm Pr}_{S^1 \times S^1,1}:S^1 \times S^1 \rightarrow S^1$ mapping $(x_1,x_2) \in S^1 \times S^1$ to $x_1 \in S^1$ as $\phi \circ f_{{\mathcal{F}}^0} \circ \Phi: S^1 \times S^1 \rightarrow S^1$.
\item Let $G$ be a graph which is homeomorphic to $S^1$ and has at least two edges and at least two vertices. In this case, if a real-valued smooth function $f_{{\mathcal{F}}^0}:M \rightarrow \mathbb{R}$ of the class ${\mathcal{F}}^0$ is reconstructed in the previous scene, then for a suitable diffeomorphism ${\Phi}:S^1 \times S^1 \rightarrow M$ with a suitable smooth real-valued function $c_{S^1}$ on $S^1$ having finitely many singular points, we have the relation $f_{{\mathcal{F}}^0} =c_{S^1} \circ {\rm Pr}_{S^1 \times S^1,1} \circ {\Phi}^{-1}$. In addition, here, if a circle-valued smooth function $f_{{\mathcal{F}}^0}:M \rightarrow S^1$ of the class ${\mathcal{F}}^0$ is reconstructed in the previous scene, then for a suitable diffeomorphism $\Psi:S^1 \times S^1 \rightarrow M$ with a suitable smooth circle-valued function $c_{S^1}$ on $S^1$ having finitely many singular points, we have the relation $f_{{\mathcal{F}}^0} =c_{S^1} \circ {\rm Pr}_{S^1 \times S^1,1} \circ {\Psi}^{-1}$. 
\end{enumerate}
\end{enumerate}
\end{Thm}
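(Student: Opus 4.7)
The plan is to establish the existence part (1) by three explicit constructions and then obtain the classification part (2) case-by-case by combining classical Morse theory and Ehresmann's fibration theorem with the normal-form analysis of \cite{feshchenko1, feshchenko2}.

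For existence, in the first case I would take $M := S^2 \subset {\mathbb{R}}^3$ with $C := \mathbb{R}$ and $f_{{\mathcal{F}}^0} := {\pi}_{3,1} {\mid}_{S^2}$; the two poles are the only singular points and each admits local coordinates presenting $f_{{\mathcal{F}}^0}$ in the form $\pm ({x_1}^2+{x_2}^2)$, so $f_{{\mathcal{F}}^0} \in {\mathcal{F}}^0$ and its Reeb graph is an edge with two vertices. In the second case I would take $M := S^1 \times S^1$, $C := S^1$, and $f_{{\mathcal{F}}^0} := {\rm Pr}_{S^1 \times S^1,1}$, which is a submersion, so $S(f_{{\mathcal{F}}^0}) = \emptyset$ and the Reeb graph is a circle with no vertex. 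In the third case I would first build a smooth function $c_{S^1}$ on $S^1$ with values in $\mathbb{R}$ or $S^1$ having only finitely many singular points, each of local form $\pm x_1^{n_{S_C}}$, so that its one-dimensional Reeb graph is isomorphic to $G$; this is straightforward since $G$ is homeomorphic to $S^1$ with finitely many vertices, and I may adjust the critical values and the parities of $n_{S_C}$ to realize the vertex structure of $G$. Setting $f_{{\mathcal{F}}^0} := c_{S^1} \circ {\rm Pr}_{S^1 \times S^1,1}$ then pulls each singular point of $c_{S^1}$ back to a critical circle of the prescribed local form, and the Reeb graph of $f_{{\mathcal{F}}^0}$ agrees with that of $c_{S^1}$.

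For the uniqueness in case (2)(a), one first observes that each critical circle allowed by ${\mathcal{F}}^0$ contributes a Reeb vertex of degree exactly two, while every vertex of the present $G$ has degree one; hence both singular points of $f_{{\mathcal{F}}^0}$ are isolated extrema. Reeb's theorem on manifolds admitting Morse functions with exactly two critical points then yields $M$ diffeomorphic to $S^2$, and a diffeomorphism of the source together with a rescaling of the target identify $f_{{\mathcal{F}}^0}$ with ${\pi}_{3,1}{\mid}_{S^2}$. In case (2)(b), $f_{{\mathcal{F}}^0}$ has no singular point and is therefore a submersion onto $S^1$, which by Ehresmann's fibration theorem realizes $M$ as a smooth $S^1$-bundle over $S^1$; among closed orientable surfaces only the torus admits such a bundle, and trivializing it identifies $f_{{\mathcal{F}}^0}$ with ${\rm Pr}_{S^1 \times S^1,1}$ up to pre- and post-composition with diffeomorphisms.

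For case (2)(c), the plan is to factor $f_{{\mathcal{F}}^0}$ through the torus projection. Because ${\mathcal{F}}^0$ admits no saddle-type singularity and every vertex of a graph homeomorphic to $S^1$ with at least two vertices has degree two, each singular preimage must be a single critical circle (an isolated extremum would force a degree-one Reeb vertex, incompatible with the cyclic shape of $G$). Regular preimages are thus disjoint unions of circles parallel to the critical circles and representing a fixed primitive class in $H_1(M)$, so collapsing $M$ transverse to this family produces a smooth fiber bundle $M \to S^1$, whence $M \cong S^1 \times S^1$. Then $f_{{\mathcal{F}}^0}$ factors as $c_{S^1} \circ {\rm Pr}_{S^1 \times S^1,1}$ after composition with diffeomorphisms $\Phi$ (respectively $\Psi$ in the circle-valued case), where $c_{S^1}$ is read off from the Reeb graph. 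The hardest step is this last factorization: one must assemble the local normal forms $\pm x_1^{n_{S_C}}$ along the critical circles into a single smooth product trivialization of $M$, which is exactly the patch-and-glue argument carried out in \cite{feshchenko1, feshchenko2}, and I would invoke those results directly to conclude.
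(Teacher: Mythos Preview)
The paper does not supply a proof of this theorem: Theorem~\ref{thm:7} is stated with attribution to \cite{feshchenko1, feshchenko2} and no argument is given in the text, so there is nothing in the paper to compare your proposal against.

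Judged on its own, your outline is coherent and hits the right structural points. The existence constructions in part~(1) are the natural ones, and your handling of (2)(a) via Reeb's sphere theorem and of (2)(b) via Ehresmann plus the classification of orientable $S^1$-bundles over $S^1$ is standard and correct. In (2)(c) your reduction---no isolated extrema because every vertex has degree two, hence all singular components are circles---is right, and the conclusion $\chi(M)=0$, so $M\cong S^1\times S^1$, follows; but the passage ``collapsing $M$ transverse to this family produces a smooth fiber bundle $M\to S^1$'' is where the real work hides, since the quotient to the Reeb graph is not a submersion at the degenerate critical circles and one must build a genuine smooth trivialization compatible with the local models $\pm x_1^{n_{S_C}}$. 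You correctly flag this as the hardest step and defer to \cite{feshchenko1, feshchenko2}, which is exactly what the paper itself does by citing the result rather than proving it.
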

\begin{Rem}
	This is on Theorem \ref{thm:7}.
	
	 For example, in our arguments, we may replace the projection ${\rm Pr}_{S^1 \times S^1,1}:S^1 \times S^1 \rightarrow S^1$ mapping $(x_1,x_2) \in S^1 \times S^1$ to $x_1 \in S^1$ by a real algebraic map on $S^1 \times S^1$ onto $N(C):=\{x \in {\mathbb{R}}^2 \mid \frac{1}{2} \leq ||x|| \leq \frac{3}{2} \}$ of Example \ref{ex:1} (\ref{ex:1.2}) reconstructed as in the proof of Theorem \ref{thm:1} or (\cite{kitazawa2, kitazawa6}).

Our present study is regarded as a kind of generalizations of this study from the viewpoint of singularity theory with topological theory and combinatorial one of real algebraic functions. More precisely, we consider a general graph $G$ instead and investigate variants of the presented result.
\end{Rem}
\section{Conflict of interest and data.}
The author is a researcher at Osaka Central
Advanced Mathematical Institute (OCAMI researcher) and this is supported by MEXT Promotion of Distinctive Joint Research Center Program JPMXP0723833165. The author is not employed there. However, our study thanks this. \\

For the present paper, no other data are not generated.

\end{document}